\DeclareMathOperator*{\R}{Re}
\DeclareMathOperator*{\I}{Im}
\newcommand{\MK}{M_{{K}}}
\newcommand{\dd}{\mathrm{d}}
\newcommand{\RR}{\mathbb{R}}
\newcommand{\CC}{\mathbb{C}}
\newcommand{\NN}{\mathbb{N}}
\newcommand{\ep}{\varepsilon}
\newcommand{\inv}{^{-1}}
\newcommand{\T}{(T(t))_{t\ge 0}}
\newtheorem{thm}{Theorem}[section]
\newtheorem{lem}[thm]{Lemma}
\theoremstyle{definition}
\newtheorem{rem}[thm]{Remark}
\numberwithin{equation}{section}
\begin{document}
\title[Optimality of the quantified Ingham-Karamata theorem]{Optimality of the quantified Ingham-Karamata theorem for operator semigroups with general resolvent growth}

\author[G. Debruyne]{Gregory Debruyne}
\thanks{G.D., Postdoctoral Fellow of the Research-Council Flanders, gratefully acknowledges support by the FWO through the grant number 12X9719N}
\address[G. Debruyne]{Department of Mathematics\\ Ghent University\\ Krijgslaan 281\\ B 9000 Ghent\\ Belgium}
\email{gregory.debruyne@ugent.be}
\author[D. Seifert]{David Seifert}
\address[D. Seifert]{St John's College \\ St Giles \\ Oxford OX1 3JP\\ United Kingdom}
\email{david.seifert@sjc.ox.ac.uk}

\begin{abstract}
We prove that a general version of the quantified Ingham-Karamata theorem for $C_0$-semigroups is sharp under  mild conditions on the resolvent growth, thus generalising the results contained in a recent paper by the same authors. It follows in particular that the well-known Batty-Duyckaerts theorem is optimal even for bounded $C_0$-semigroups whose generator has subpolynomial resolvent growth. Our proof is based on an elegant application of the open mapping theorem, which we complement by a crucial technical lemma allowing us to strengthen our earlier results.

\end{abstract}

\subjclass[2010]{40E05, 47D06 (44A10, 34D05).}
\keywords{Tauberian theorems, rates of decay, Laplace transform, optimality, operator semigroups}

\maketitle

\section{Introduction}\label{sec:intro}

Quantified Tauberian theorems have many important applications in areas ranging from number theory to partial differential equations, but they are also of considerable intrinsic interest. Over the past decade there has accordingly been a great deal of work exploring quantified Tauberian theorems, their applications and their optimality. Of particular interest in many cases, especially those dealing with or at least motivated by applications to energy decay in damped wave equations, are quantified Tauberian theorems for operator semigroups; see for instance \cite{BaBoTo16, BaChTo16,BD08, BT10,ChiSei16, Sta18}. The following result, which is proved in \cite{Sta18}, is a quantified version of the classical Ingham-Karamata theorem \cite{Ing33, Ka34} for $C_0$-semigroups; we refer the reader to \cite[Chapter~III]{Kor04} for a historical overview of the Ingham-Karamata theorem and to \cite{DebVind1, DebVind2} for some important recent contributions on the unquantified version of the result. Throughout this paper, given a continuous non-decreasing function $M\colon\RR_+\to(0,\infty)$ we define the region $\Omega_M$ by
\begin{equation}\label{eq:Omega}
\Omega_M=\left\{\lambda\in\CC:\R\lambda>-\frac{1}{M(|\I\lambda|)}\right\}.
\end{equation}
\begin{thm}\label{thm:MK}
Let $X$ be a complex Banach space and let $A$ be the generator of a bounded $C_0$-semigroup $\T$ on $X$. Suppose that $M,K\colon\RR_+\to(0,\infty)$ are continuous non-decreasing functions such that for some $\delta>1$ the region $\Omega_{\delta M}$
is contained in the resolvent set of $A$ and 
\begin{equation}\label{eq:K_bound}
\sup_{\lambda\in\Omega_{\delta M}}\frac{\|R(\lambda,A)\|}{K(|{\I\lambda}|)}<\infty.
\end{equation}
 Suppose further that there exists $\ep\in(0,1)$ such that
\begin{equation}\label{eq:K_bound2}
K(s)=O\big(\exp\big(\exp\big((sM(s))^{1-\ep}\big)\big)\big),\quad s\to\infty.
\end{equation}
Then there exists a constant $c\in(0,1)$ such that 
\begin{equation}\label{eq:MK_rate}
\|T(t)A\inv\|=O\big(\MK\inv(ct)\inv\big),\quad t\to\infty,
\end{equation}
where $\MK\colon\RR_+\to(0,\infty)$ is the function defined by $\MK(s)=M(s)(\log(1+s)+\log(1+K(s)))$, $s\ge0$.
\end{thm}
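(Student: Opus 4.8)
The plan is to adapt the classical Newman-style proof of the Ingham--Karamata theorem, tracking the resolvent bound $K$ quantitatively. Since $\Omega_{\delta M}$ contains a neighbourhood of $0$ we have $0\in\rho(A)$ and $A\inv\in\B(X)$, and it is enough to bound $\|u(t)\|$ uniformly over the orbits $u(t)=T(t)A\inv x$ with $\|x\|\le1$. I would work with $g:=R(\cdot,A)A\inv x$, which coincides with $\widehat u$ on $\CC_+$; the resolvent identity $R(\lambda,A)A\inv x=\lambda\inv\big(R(\lambda,A)+A\inv\big)x$ shows that $g$ extends holomorphically to all of $\Omega_{\delta M}$ (the apparent singularity at $0$ being removable), with
\[
\|g(\lambda)\|\le\frac{C_0\,K(|\I\lambda|)}{1+|\lambda|}\qquad(\lambda\in\Omega_{\delta M})
\]
by \eqref{eq:K_bound}, while $\|g(\lambda)\|\le C_0(\R\lambda)\inv$ on $\CC_+$ by boundedness of $\T$. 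It will be convenient to split $u=u\mathbbm{1}_{[0,t]}+u\mathbbm{1}_{(t,\infty)}$ and to note that the entire function $g_{\le t}(\lambda)=\int_0^t e^{-\lambda s}u(s)\,\dd s$ satisfies $\|e^{\lambda t}g_{\le t}(\lambda)\|\le C_0|\R\lambda|\inv$ for $\R\lambda<0$, while $g_{>t}:=g-g_{\le t}$ satisfies $\|e^{\lambda t}g_{>t}(\lambda)\|\le C_0(\R\lambda)\inv$ on $\CC_+$.

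For $t$ large I would fix a cut-off $R=R_t\to\infty$ and a small abscissa $\eta=\beta/t$, and start from the (symmetric) Laplace inversion formula $u(t)=\frac{1}{2\pi i}\int_{\R\lambda=\eta}e^{\lambda t}g(\lambda)\,\dd\lambda$, valid because $u$ is $C^1$. Deforming the truncated segment $[\eta-iR,\eta+iR]$ to the left across $i\RR$ into $\Omega_{\delta M}$ --- along $\R\lambda=-\alpha/M(R)$ for $|\I\lambda|\le R$, which is legitimate since $M$ is non-decreasing and $0<\alpha<1/\delta$ --- Cauchy's theorem gives
\[
u(t)=\frac{1}{2\pi i}\int_{P_R}e^{\lambda t}g(\lambda)\,\dd\lambda\ -\ \frac{1}{2\pi i}\int_{\R\lambda=\eta,\ |\I\lambda|\ge R}e^{\lambda t}g(\lambda)\,\dd\lambda,
\]
where $P_R$ denotes the deformed path, contained in $\{-\alpha/M(R)\le\R\lambda\le\eta\}$. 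The far-field tail is dealt with by the classical Newman argument: using the splitting $u=u\mathbbm{1}_{[0,t]}+u\mathbbm{1}_{(t,\infty)}$ and a regularising factor such as $\kappa_R(\lambda)=\lambda\inv+\lambda R^{-2}$ (which vanishes at $\pm iR$), one pushes the $g_{>t}$-part to the right, where $\|e^{\lambda t}g_{>t}(\lambda)\|\le C_0(\R\lambda)\inv$, and deforms the entire $g_{\le t}$-part; this produces a contribution of size $O(1/R)$ that depends only on $\|u\|_\infty$ and not on $K$. On $P_R$ one has $|e^{\lambda t}|\le e^{-\alpha t/M(R)}$ on the part with $\R\lambda<0$ together with $\|g(\lambda)\|\lesssim K(R)(1+|\lambda|)\inv$, so this contribution is $\lesssim e^{-\alpha t/M(R)}K(R)\log(1+R)$, apart from the pieces near $\I\lambda=\pm R$.

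It then remains to optimise the cut-off. Balancing the contribution of $P_R$ against $1/R$ amounts, on taking logarithms, to the relation $\alpha t/M(R)\asymp\log\big((1+R)(1+K(R))\big)$, that is $t\asymp\alpha\inv\MK(R)$; choosing $R_t=\MK\inv(ct)$ for a suitably small constant $c\in(0,1)$ then makes every contribution $O\big(\MK\inv(ct)\inv\big)$, uniformly in $\|x\|\le1$, which is \eqref{eq:MK_rate}. The step I expect to be the main obstacle is controlling the pieces of the contour near the cut-off frequencies $\pm iR$: there the resolvent can be as large as $\sim K(R)$, and only the exponential factor $e^{-\alpha t/M(R)}$ is available to offset it, with just enough room left to absorb in addition the logarithmic corrections and the regularising kernel. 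This is exactly the point at which hypothesis \eqref{eq:K_bound2} on $K$ must be used: it forces $\log K(R)$ to be suitably dominated by $t/M(R)$ in the regime $t\asymp\MK(R)$, so that, after shrinking $c$ (equivalently, making $\alpha/c$ large), $e^{-\alpha t/M(R)}K(R)$ decays fast enough, and it also guarantees that $R_t\to\infty$ at the right rate; the freedom in the constant $c$ is precisely what supplies the needed slack.
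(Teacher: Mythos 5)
First, a point of reference: the paper does not prove Theorem~\ref{thm:MK} at all --- it quotes the result from \cite{Sta18} (the case $M=K$ going back to \cite{BD08}) --- so your attempt has to be measured against the arguments in those references, which do indeed follow the Newman--Batty--Duyckaerts contour scheme you describe. Your setup is sound: the identity $R(\lambda,A)A\inv=\lambda\inv(R(\lambda,A)+A\inv)$, the bounds on $g$, $g_{\le t}$ and $g_{>t}$, the contour deformation into $\{\R\lambda=-\alpha/M(R)\}$, and the balancing $t\asymp\alpha\inv \MK(R)$ for the long vertical part of $P_R$ are all correct and are exactly how the $M=K$ case is proved. The genuine gap is precisely at the step you flag as ``the main obstacle'', and your proposed resolution of it does not work. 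On the connector segments $\{\sigma\pm iR:-\alpha/M(R)\le\sigma\le\eta\}$ the factor $e^{-\alpha t/M(R)}$ is \emph{not} available: $|e^{\lambda t}|$ there ranges up to $e^{\eta t}=e^{\beta}\asymp1$, so the only smallness comes from the first-order vanishing of $\kappa_R$ at $\pm iR$. Quantitatively this gives a connector contribution of order $K(R)\,R^{-2}\int e^{\sigma t}|\sigma|\,\dd\sigma\lesssim K(R)/(R^{2}M(R)t)$, and requiring this to be $O(1/R)$ in the regime $t\asymp\MK(R)$ forces roughly $K(R)\lesssim R\,M(R)^{2}\log(1+K(R))$. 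That is a drastic restriction on $K$ relative to $M$; under \eqref{eq:K_bound2} the function $K$ may be doubly exponential in $sM(s)$, so the first-order kernel loses by an exponential margin.

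Moreover, the role you assign to \eqref{eq:K_bound2} is vacuous: in the regime $t\asymp\MK(R)$ one has $t/M(R)\gtrsim\log(1+K(R))$ \emph{by the definition of} $\MK$, with no hypothesis on $K$ needed, so ``$\log K(R)$ dominated by $t/M(R)$'' cannot be where \eqref{eq:K_bound2} enters --- if that were all, the hypothesis would be superfluous. The actual content of the proofs in \cite{Sta18} (and in the related arguments of \cite{BaChTo16,ChiSei16}) is the construction of a much better regularising kernel: one that is holomorphic and of controlled size on the strip $|{\R\lambda}|\lesssim 1/M(R)$, normalised near $\lambda=0$, and decaying along $i\RR$ fast enough to suppress the factor $K(|{\I\lambda}|)$ near $|{\I\lambda}|=R$ without its growth in the left half-strip overwhelming the gain $e^{-\alpha t/M(R)}$. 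A function bounded in a strip of width $\sim1/M$ cannot decay along the axis faster than $\exp(-c\,|s|M(s))$, which is exactly why the Gevrey-type threshold $\log\log K(s)\lesssim(sM(s))^{1-\ep}$ in \eqref{eq:K_bound2} appears, and also why the conclusion only holds for \emph{some} $c\in(0,1)$ rather than all. Supplying this kernel (or an equivalent device) is the heart of the proof and is missing from your argument; as written, your sketch establishes the theorem essentially only under the much stronger assumption $K\lesssim RM^2$, i.e.\ little beyond the Batty--Duyckaerts case.
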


In the important special case where $M=K$ the result was first proved in \cite{BD08}. Note that in this case it suffices to assume that $\sigma(A)\cap i\RR=\emptyset$ and that $\|R(is,A)\|=O(M(|s|))$ as $|s|\to\infty$, since this already implies, by a standard Neumann series argument, that $\Omega_{\delta M}$ is contained in the resolvent set $\rho(A)$ of $A$ for all $\delta>1$ and that the appropriate version of \eqref{eq:K_bound} holds. Moreover, the estimate \eqref{eq:K_bound2} is trivially satisfied in this case. If we assume that $\|R(is,A)\|\le M(|s|)$, $s\in\RR$, then the appropriate version of \eqref{eq:MK_rate} with $K=M$ in fact holds for \emph{all} $c\in(0,1)$, as is shown in \cite{Sta18}. What this discussion of the case $M=K$ shows is that the added generality of Theorem~\ref{thm:MK} is of value only when $K(s)\ge M(s)$, $s\ge0$. Remaining for the moment in the case where $M=K$, it is clear that the sharpest rate in \eqref{eq:MK_rate} is then obtained by choosing  $M(s)=\smash{\sup_{|r|\le s}}\|R(ir,A)\|,$ $s\ge0,$
 and for this choice it is shown in \cite{BD08} that one always has the lower bound $\|T(t)A\inv\|\ge  CM\inv(ct)\inv$ for some constants $C,c>0$ and all sufficiently large $t>0$; see also \cite[Section~4.4]{ABHN11}. Here $M\inv$ denotes any right-inverse of the function $M$. This raises the question whether the upper bound in \eqref{eq:MK_rate} is sharp. We remark that if $M_{K}$ grows at least polynomially then the precise value of $c$ in \eqref{eq:MK_rate} is insignificant as the constant $c$ can then generally be absorbed in the $O$-constant. However, this is no longer the case when $M_{K}$ is of subpolynomial growth, in which case the precise value of the constant $c$ can be crucial. If  $M=K$ and both functions have fairly rapid growth, for instance if both are exponential functions, then $M\inv$ and $\smash{\MK\inv}$ have the same asymptotic behaviour. Hence at least in this important special case the optimality question is of limited interest and we will incur no great loss of generality by assuming, as we do in our main results later on, that $M_K$ grows at most exponentially. On the other hand, it was shown in \cite{BT10}  that if $M(s)=K(s)=\smash{s^\alpha}$, $s\ge1$, for some $\alpha>0$ and if $X$ is a Hilbert space then \eqref{eq:MK_rate} may be replaced by the optimal estimate $\smash{\|T(t)A\inv\|=O(t^{-1/\alpha})}$, $t\to\infty$. This Hilbert space result has subsequently been extended, first in \cite{BaChTo16} and then rather substantially in \cite{RoSeSt17}. On the other hand, it was also shown in \cite{BT10} that in the above polynomial case the upper bound in \eqref{eq:MK_rate} is sharp if we impose no restrictions on the Banach space $X$; see also \cite{BaBoTo16, Sta18}. These arguments show optimality of the quantified Ingham-Karamata theorem both for operator semigroups and for the more general case of functions whose Laplace transforms satisfy suitable conditions. At the heart of these proofs lies an extremely delicate construction of a certain complex measure. In our recent paper \cite{DebSei18} we presented a significantly simpler optimality proof based on a striking application of the open mapping theorem. The results in \cite{DebSei18} go beyond the polynomial case discussed above, and this generalisation had already been achieved in \cite{Sta18} by extending the existing more complicated technique. However, in the semigroup case both the results in \cite{DebSei18} and those in \cite{Sta18} essentially require the functions $M$ and $K$ to grow at least polynomially, and it was left open whether Theorem~\ref{thm:MK} is optimal for more slowly growing functions.

The aim of this paper, which can be viewed as a follow-up contribution to \cite{DebSei18}, is to prove a more general optimality result in the semigroup setting which in particular imposes no lower bound whatsoever on the functions $M$ and $K$ appearing in Theorem~\ref{thm:MK}. Furthermore, we  improve on the value of the constant $c$ appearing in our earlier optimality results \cite[Theorems~2.2 and 2.4]{DebSei18}, which is significant now that $M_{K}$ is allowed to have subpolynomial growth. In fact, we shall obtain the value $c=1$, which is best possible in view of the fact that \eqref{eq:MK_rate} holds  for all $c\in(0,1)$ when $M=K$. We achieve our results by refining the technique used in \cite{DebSei18}. In particular, we first construct, in Lemma~\ref{lem:sequ} below,  a sequence of functions with certain properties, which we then use to prove an important preliminary result, Theorem~\ref{thm:opt}, which can be viewed as proving optimality of a particular variant of the quantified Ingham-Karamata theorem for scalar-valued functions and is of considerable intrinsic interest. These preliminary results are presented in Section~\ref{sec:prelim}. Then in Section~\ref{sec:sg} we return to a construction originally given in \cite{BT10} which allows us to prove, in Theorem~\ref{thm:sg}, that the upper bound in \eqref{eq:MK_rate} is sharp even for slowly growing functions $M$ and $K$.
                         
We use standard notation throughout. In particular, we let $\RR_+=[0,\infty)$ and $\NN=\{1, 2, 3,\dots\}$. For real-valued quantities $x, y$ we write $x\lesssim y$ if there exists a constant $C>0$ such that $x\le Cy$, and we furthermore make use of standard asymptotic notation, such as `big O' and `little o'. For background material on the theory of $C_0$-semigroups we refer the reader to \cite{ABHN11}.

\section{Preliminary results}\label{sec:prelim}

We begin with a technical lemma which is central to this paper.

\begin{lem} \label{lem:sequ}There exist complex-valued functions $f_{k}\in W^{1,\infty}(\RR)\cap W^{1,1}(\RR)$, $k\in\NN$, such that $f_k'$ is uniformly continuous for each $k\in\NN$ and the following properties hold:
\begin{enumerate}
 \item[\textup{1.}]
\begin{enumerate}
 \item[\textup{(a)}] The functions $f_k$, $k\in\NN$, are uniformly bounded in $W^{1,\infty}(\RR)$ and in $W^{1,1}(\RR)$.
 \item[\textup{(b)}] We have $\inf_{k\in\NN}|f_k(0)|>0$.
 \end{enumerate}
 \item[\textup{2.}]
\begin{enumerate}
 \item[\textup{(a)}] There exist constants $\ep_0\in(0,1)$ and $C>0$ such that the functions
 \begin{equation}\label{eq:transform}
\widehat{f_k}(\lambda)=\int_\RR e^{-\lambda t}f_k(t)\,\dd t,\quad k\in\NN,\ \lambda\in i\RR,
\end{equation}
extend analytically to  $\{\lambda\in\CC:|\lambda|<\ep_0\}$ and satisfy $|\widehat{f_k}(\lambda)|\le C|\lambda|^k,$ for all $k\in\NN$ and $|\lambda|<\ep_0.$
 \item[\textup{(b)}] There exists a constant $c>0$ such that for each $k\in\NN$ the function $\widehat{f_k}$ defined in \eqref{eq:transform} extends analytically to the strip 
$$S_{k,c}=\left\{\lambda\in\CC:|{\R\lambda}|< \frac{1}{c\log(k+1)}\right\},$$  
and moreover
 $\sup_{k\in\NN}\sup_{\lambda\in S_{k,c}}|\lambda\widehat{f_k}(\lambda)|<\infty.$
 \end{enumerate}
\end{enumerate}
Furthermore, the constant $c>0$ in \textup{2.(b)} can be chosen arbitrarily small. 
\end{lem}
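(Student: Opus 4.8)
The plan is to build the functions $f_k$ explicitly as suitable modifications of a single fixed "bump-type" building block, so that the polynomial vanishing of $\widehat{f_k}$ at the origin (property 2.(a)) is forced by multiplying by a factor that is $O(\lambda^k)$ near $0$, while the strip of analyticity in 2.(b) is controlled by choosing the block to have exponential decay at a rate that degrades only logarithmically in $k$. Concretely, I would start from a fixed real-valued $g\in W^{1,\infty}(\RR)\cap W^{1,1}(\RR)$ with $g'$ uniformly continuous, with $g(0)\neq0$, and with $\widehat g$ analytic on a fixed strip $|{\R\lambda}|<a$ and bounded there (say, $g(t)=e^{-|t|}$ after a small smoothing near $0$, or a Gaussian, whose Laplace transform is entire of the right growth). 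Then set $f_k(t)=c_k\,(\phi^{*k}*g)(t)$ where $\phi$ is a fixed compactly supported probability-type kernel with $\widehat\phi(0)=1$, $\widehat\phi$ entire, and $\widehat\phi$ normalised so that $\widehat\phi(\lambda)=1+O(\lambda)$ but actually I want the $k$-fold convolution to produce a zero of order $k$: so instead take $\psi$ with $\widehat\psi(0)=0$, $\widehat\psi'(0)\neq0$, $\widehat\psi$ entire and bounded on strips, and put $f_k=c_k\,\psi^{*k}*g$, so that $\widehat{f_k}=c_k\widehat\psi(\lambda)^k\widehat g(\lambda)$ vanishes to order exactly $k$ at $0$. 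The normalising constants $c_k$ are chosen to keep $\|f_k\|_{W^{1,\infty}}$ and $\|f_k\|_{W^{1,1}}$ uniformly bounded; since convolution with an $L^1$ kernel of norm $\|\psi\|_1$ multiplies these norms by $\|\psi\|_1^k$, I would rescale $\psi$ (equivalently choose $c_k=\|\psi\|_1^{-k}$ times a bounded factor) so that $\|\psi\|_1$ effectively equals $1$; the point $\inf_k|f_k(0)|>0$ then needs a separate low-frequency argument, most cleanly by taking $g$ and $\psi$ with nonnegative Fourier transforms so that $f_k(0)=\int\widehat{f_k}\ge$ a uniform positive quantity, or by a direct computation with the chosen explicit functions.

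Next, for property 2.(a) I would note that $\widehat{f_k}(\lambda)=c_k\widehat\psi(\lambda)^k\widehat g(\lambda)$ with $\widehat\psi(\lambda)=\lambda\,h(\lambda)$ for an entire $h$ with $h(0)\neq0$, so near $0$ we have $|\widehat{f_k}(\lambda)|\le C|\lambda|^k$ once $|c_k|\,|h(\lambda)|^k\,|\widehat g(\lambda)|$ is uniformly bounded on a fixed small disc $|\lambda|<\ep_0$; this holds provided $|c_k|\,(\sup_{|\lambda|<\ep_0}|h|)^k$ is bounded, which again fixes the growth of $c_k$ and is compatible with the $W^{1,1}$/$W^{1,\infty}$ normalisation after possibly shrinking $\ep_0$. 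For property 2.(b), the strip $S_{k,c}$ has half-width $1/(c\log(k+1))$, so what I need is that $|\lambda\widehat{f_k}(\lambda)|=|c_k|\,|\widehat\psi(\lambda)|^{k}\,|\lambda\widehat g(\lambda)|$ stays bounded for $|{\R\lambda}|$ up to that width: writing $|\widehat\psi(\lambda)|\le 1+C'|{\R\lambda}|$ on the relevant region (valid because on $i\RR$ one has $|\widehat\psi|\le1$ by the normalisation, and the derivative in the $\R$-direction is controlled), raising to the $k$-th power gives $(1+C'/(c\log(k+1)))^{k}$, which is $\exp(O(k/\log(k+1)))$ — this is \emph{not} bounded, and that is the crux of the difficulty.

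The main obstacle, therefore, is exactly this tension: vanishing to order $k$ at the origin wants $\widehat\psi$ to have a genuine zero there, but then $|\widehat\psi|$ exceeds $1$ as soon as we leave the imaginary axis, and the $k$-th power blows up on any strip of width $\gtrsim 1/k$, which is far narrower than $1/\log k$. The resolution I would pursue is to not take a pure $k$-fold convolution but to \emph{truncate in frequency}: choose $\widehat{f_k}(\lambda)=c_k\,\lambda^k\,\chi(\lambda/\rho_k)\,\widehat g(\lambda)$ where $\chi$ is a fixed entire function equal to $1$ on a neighbourhood of $0$, of exponential type (so its inverse transform is a nice kernel), with $\chi$ decaying along horizontal lines, and $\rho_k\to0$ slowly; the factor $\lambda^k$ gives 2.(a) for $|\lambda|<\ep_0$, while for 2.(b) on the strip of width $1/(c\log(k+1))$ the product $\lambda^k\chi(\lambda/\rho_k)$ is forced to be small by the decay of $\chi$ once $|\lambda|\gg\rho_k$, provided $\rho_k^k$ decays fast enough to beat the $(1/\log k)^{-k}$ growth — i.e. $\rho_k\sim 1/\log(k+1)$ works, since $\rho_k^k \cdot (c\log(k+1))^{-k}=(c)^{-k}$, hmm, that is still bad. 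Let me instead take $\rho_k$ comparable to a fixed small constant and exploit that on the strip $|\lambda|\le |{\I\lambda}|+1/(c\log(k+1))$ is large for large $|\I\lambda|$, so I genuinely need $\chi$ to kill the polynomial $\lambda^k$: choosing $\chi(\lambda)=\Gamma$-type or $e^{-\lambda^2}$-type with $\chi(\lambda/\rho_k)$ decaying like $e^{-(|\lambda|/\rho_k)^2}$ dominates $|\lambda|^k$ as soon as $|\lambda|\gtrsim \rho_k\sqrt{k}$, and near $0$ where $|\lambda|<\rho_k\sqrt k$ we bound $|\lambda|^k\le(\rho_k\sqrt k)^k$ which we absorb into $c_k$. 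The remaining check — that $f_k=\F^{-1}\widehat{f_k}$ genuinely lies in $W^{1,\infty}\cap W^{1,1}$ with uniformly bounded norms and uniformly continuous derivative, that $f_k(0)$ is bounded below (arrange $\widehat{f_k}\ge0$), and that $\ep_0$ and $c$ come out independent of $k$ with $c$ freely shrinkable (shrinking $c$ only \emph{widens} the strip, which is handled by taking $\chi$ to decay faster, i.e. rescaling $\rho_k$) — is then a matter of routine but careful Fourier-analytic estimates, which I would organise as: (i) define $\widehat{f_k}$, (ii) verify analyticity and the two growth bounds by the Paley–Wiener/contour-shift estimates just sketched, (iii) invert and verify the $W^{1,p}$ properties and $f_k(0)>0$ uniformly, (iv) observe the freedom in $c$.
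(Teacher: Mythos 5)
Your proposal correctly isolates the central difficulty --- reconciling a zero of order $k$ at the origin with uniform boundedness of $\lambda\widehat{f_k}(\lambda)$ on a strip of width only $1/(c\log(k+1))$ --- but the construction you settle on does not resolve it, and the properties are never actually verified for it. Concretely, for the ansatz $\widehat{f_k}(\lambda)=c_k\lambda^k\chi(\lambda/\rho_k)\widehat{g}(\lambda)$ with Gaussian-type $\chi$, the profile $|s|^k e^{-s^2/\rho_k^2}$ on the imaginary axis concentrates on an interval of length $O(\rho_k)$ around $|s|=\rho_k\sqrt{k/2}$. The uniform bounds required by 1.(a) force $|\widehat{f_k}(is)|\le \min(C,C/|s|)$ pointwise (since $\|f_k\|_{L^1}$ and $\|f_k'\|_{L^1}$ control $\sup_s|\widehat{f_k}(is)|$ and $\sup_s|s\,\widehat{f_k}(is)|$ respectively), so
$$|f_k(0)|\le\frac{1}{2\pi}\int_\RR|\widehat{f_k}(is)|\,\dd s\lesssim \rho_k\min\Bigl(1,\frac{1}{\rho_k\sqrt{k}}\Bigr)=\min\Bigl(\rho_k,\frac{1}{\sqrt{k}}\Bigr)\longrightarrow 0$$
for \emph{every} choice of $\rho_k$, so 1.(b) fails. ``Absorbing $(\rho_k\sqrt{k})^k$ into $c_k$'', as you suggest, is exactly what triggers this: the mass of $\widehat{f_k}$ is pushed into a shrinking (or escaping) frequency window, and $f_k(0)$ collapses. (Separately, your cutoff must be $e^{+\lambda^2}$ rather than $e^{-\lambda^2}$ to decay along the imaginary axis, and the nonnegativity trick for $f_k(0)$ cannot work because of the phase $i^k s^k$.)

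The idea you are missing is that the high-order zero should come not from $\lambda^k$ times a dilated fixed cutoff, but from a \emph{large power of a function bounded by exactly $1$ on the whole strip}. The paper takes $k=m2^m$ ($m$ even) and builds $\widehat{f_k}(is)$ from $H_m(s)=\bigl(s^m/(1+s^m)\bigr)^{2^m}$ times fixed bounded factors. On the strip $|{\R\lambda}|<1/(2m)$ one has $\R(\lambda^m)>0$ for $|\lambda|\ge 3/4$, hence $|\lambda^m/(1+\lambda^m)|\le 1$ there, and this bound is preserved under the $2^m$-th power; the zero at the origin has order $m2^m=k$ while the strip width is $\sim 1/m\sim 1/\log k$, which is precisely the trade-off the lemma demands. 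Moreover, on the imaginary axis $H_m$ is essentially the indicator function of $\{|s|>1\}$, so the mass of $\widehat{f_k}$ stays in a fixed annulus ($|s|\approx 25$ in the paper) and $|f_k(0)|$ remains bounded below, while the uniform $W^{1,1}$ bounds follow from a careful integration-by-parts estimate. Without a self-improving device of this kind --- a bound by $1$ that costs nothing when raised to a huge power --- your scheme cannot simultaneously satisfy 1.(a), 1.(b), 2.(a) and 2.(b).
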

\begin{proof} 
Note first that by considering a shifted sequence it suffices to define the functions $f_k\colon\RR\to\CC$ only for $k\ge k_0$ where $k_0\in\NN$ may be large. In fact, we shall initially define $f_k$ only for integers $k\in\NN$ which are sufficiently large and of a rather particular form. More specifically, with each even integer $m\in\NN$ we associate the natural number $k_m=m2^m$, and we shall construct a suitable function $f_{k_m}$ for all sufficiently large $m$. To obtain the full sequence, we may then take $f_{k}$ to be the function $\smash{f_{k_{m(k)}}}$, where $m(k)$ is the unique even integer such that $k_{m(k) -2} < k \leq k_{m(k)}$ when $k$ is sufficiently large.  Note that  
$$ \frac{k_{m(k)}}{k} < \frac{k_{m(k)}}{k_{m(k) - 2}} = \frac{4 m(k)}{m(k) - 2} \leq 5$$
when $k\in\NN$ is sufficiently large, and hence $k\le \smash{k_{m(k)}}\le 5k$. In particular, it suffices to construct the subsequence $f_{k_{m}}$ with the desired properties, as the induced sequence $f_{k}$ will inherit them. For simplicity of notation we shall write $f_m$ instead of $f_{k_m}$ in what follows. Given an even integer $m\in\NN$ let
\begin{equation}\label{eq:f}
f_m(t)=\frac{1}{2\pi}\int_\RR e^{its} \frac{F(s)G_{m}(s)H_{m}(s)}{1+s^4}\,\dd s,\quad t\in\RR,
\end{equation}
where 
$$ F(s) = \frac{(s^{2} - 25^2)^{4}}{(1+s^{2})^{4}},\ \;G_m(s) =  \frac{s^{m}}{25^{m} + s^{m}},\ \; H_m(s) = \frac{s^{m2^m}}{(1+s^{m})^{2^m}},\quad s\in\RR.$$
Thus $f_m$ is defined in terms of its Fourier transform, and it follows that
\begin{equation}\label{eq:Fourier}
\widehat{f_m}(is)=\frac{F(s)G_{m}(s)H_{m}(s)}{1+s^4},\quad s\in\RR.
\end{equation}
It is immediately clear that $f_m\in W^{1,\infty}(\RR)\cap W^{1,1}(\RR)$ and that $f_m'$ is uniformly continuous for each $m$. We verify the remaining properties in turn.

In order to verify condition 1.(a) we show explicitly that the sequence $f_m$ is uniformly bounded in $L^\infty(\RR)$ and in $L^1(\RR)$. Once this has been established, the same argument applied to the derivatives
$$f'_m(t)=\frac{1}{2\pi}\int_\RR is e^{its} \frac{F(s)G_{m}(s)H_{m}(s)}{1+s^4}\,\dd s,\quad t\in\RR,$$ 
will yield the result. In fact, it is straightforward to see from the definitions of $F,\ G_m$ and $H_m$ that the functions $f_m$ are uniformly bounded in $L^\infty(\RR)$, so we focus on the $L^1$-estimates. Since the function $s\mapsto(1+s^4)\inv$ is an element of $W^{2,1}(\RR)$, integration by parts shows that $f_m(t)\lesssim C_m(1+t^2)\inv$, $t\in\RR$, where
\begin{equation}\label{eq:sup}
C_m= \sup\left\{ \max_{0\le j_1+j_2+j_3\le2}|F^{(j_{1})}(s)|  |G_{m}^{(j_{2})}(s)| |H_m^{(j_{3})}(s)|:s\in\RR\right\}.
\end{equation}
The result will follow once we have proved that the numbers $C_m$ are uniformly bounded for all sufficiently large even integers $m$. In order to estimate the supremum we split the real line into several subintervals. First, for $|s| \leq 5$ simple calculations show that $|\smash{G_m^{(j)}(s)}|\lesssim m^j5^{-m}$ and $\smash{|H_m^{(j)}(s)|}\lesssim (m2^m)^j$, where $0\le j\le2$. Since $F\in W^{2,\infty}(\RR)$ it follows easily that the supremum over $|s|\le5$ is bounded uniformly in $m$, as required. For $|s| \geq 5$, we have $|\smash{H_m^{(j)}(s)}|\lesssim 1$ for $0\le j\le2$, so we may concentrate on the estimates for $G_m$ when $|s| \geq 5$. Note that
\begin{equation*}\label{eq:deriv}
G_m'(s)=\frac{m25^{m}s^{m-1}}{(25^{m}+ s^{m})^{2}},\quad s\in\RR.
\end{equation*}
Thus for $5\le|s|\le25(1-m^{-1/2})$ we have
$$|G_m'(s)|\lesssim m\left(1-\frac{1}{m^{1/2}}\right)^m\lesssim m\left(2e\inv\right)^{m^{1/2}}\lesssim1,$$
uniformly in $m$. Similarly, $|G_m'(s)|\lesssim1$ for $|s|\ge25(1+m^{-1/2})$, and an analogous argument shows that $|G_m''(s)|\lesssim1$ in both ranges, with implicit constants which are independent of $m$. It remains to consider the range $25(1-m^{-1/2})\le|s|\le25(1+m^{-1/2})$. Straightforward estimates show that $\|\smash{G_m^{(j)}}\|_{L^\infty}\lesssim m^j$ for $0\le j\le2$, and we also observe that $|F(s)|\lesssim m^{-2}$ and $|F'(s)|\lesssim \smash{m^{-3/2}}$ over the range in question. Together with the fact that $F\in W^{2,\infty}(\RR)$ the above estimates show that in \eqref{eq:sup} the supremum over the range $|s|\ge5$ is also uniformly bounded in $m$, so property 1.(a) holds.

In order to verify property 1.(b) we begin by observing that for $s\in\RR$ with $|s| > 2$ we have $H_m(s)\to1$ as $m\to\infty$. Moreover, the function $H_m$ is even and increasing on the positive half-axis. It follows in particular that $H_m(s)\gtrsim1$ uniformly in $m$ and $s$ for $|s|\ge25$. Since $G_m(s)\ge 1/2$ for $|s|\ge25$, we deduce from \eqref{eq:f} that 
$$f_m(0)\gtrsim\frac{1}{2\pi}\int_{|s|\ge25}\frac{F(s)}{1+s^4}\,\dd s>0,$$
where the implicit constant is independent of $m$. Hence property 1.(b) holds.

We now turn to property 2.(a) and set $\ep_0=1/3$. By \eqref{eq:Fourier} the functions $\smash{\widehat{f_m}}$ extend analytically to  $\{\lambda\in\CC:|\lambda|<\ep_0\}$, and crude estimates yield $|\smash{\widehat{f_m}(\lambda)}|\lesssim|\lambda|^{m2^m}$ for $|\lambda|<\ep_0$. Recalling that $k_m=m2^m$ it follows that 2.(a) is satisfied.

In order to verify property 2.(b) we begin by observing that the function $\lambda\mapsto \lambda F(\lambda)(1+\lambda^4)^{-1}$ is bounded on the (rotated) strip $\{\lambda\in\CC: |{\I \lambda}| < 1/2 \}$, so by \eqref{eq:Fourier} and the fact that $\log(k_m+1)\gtrsim m$ it suffices to show that the functions $G_m$ and $H_m$ are uniformly bounded in the strips $S_m=\{\lambda\in\CC: |{\I \lambda}| < 1/2m \}$. In fact, it is enough to show that the functions $Q_m(\lambda) =\lambda^m(1+\lambda^m)\inv$, $\lambda\in\CC$, satisfy $\sup_{\lambda\in S_m}|Q_m(\lambda)|\le1$ for all sufficiently large even integers $m$. Indeed, the corresponding claim for the functions $H_m$ then follows immediately, and for $G_m$ it is a consequence of the fact that $G_m(\lambda)=Q_m(\lambda/25)$, $\lambda\in\CC$. It is straightforward to see that if $|\lambda|<3/4$ then $|Q_m(\lambda)|\le1$ for all sufficiently large $m$. Suppose therefore that $\lambda\in S_m$ satisfies $|\lambda|\ge3/4$ and suppose for the moment that $\R\lambda>0$. We write $\lambda=re^{i\theta}$ where $r\ge3/4$ and $|\theta|\le\pi/2$. Since $|{\I\lambda}|<1/2m$ we have
$$|\theta| \leq \frac{\pi |\sin \theta|}{2} \leq \frac{\pi}{3m}.$$
Hence the argument of $\lambda^{m}$ lies between $-\pi/3$ and $\pi/3$ and, in particular, $\R\lambda^m>0$.  Thus $|\lambda^{m}| \leq |1+\lambda^{m}|$, as required. A similar argument using the fact that $m$ is even applies if $\R\lambda<0$, so we obtain property 2.(b). 

Finally, if we suppose that the functions $f_k$, $k\in\NN$, have all the required properties and satisfy condition 2.(b) for the value $c=c_0>0$ then for any $\alpha\ge1$ the functions $t\mapsto f_k(\alpha t)$, $t\in\RR$, $k\in\NN$, also have the required properties and satisfy 2.(b) for the value $c=c_0/\alpha$.
\end{proof}

We shall also require the following technical result; see \cite[Lemma~2.1]{DebSei18}. 

\begin{lem}\label{lem}
Let $M,K, r\colon\RR_+\to(0,\infty)$ and assume that $M$ and $K$ are non-decreasing and continuous. Suppose that 
$$|f(t)|=O\big(r(t)\inv\big),\quad t\to\infty,$$
 for every bounded Lipschitz continuous function $f\colon\RR_+\to\CC$ such that $f'$ is uniformly continuous and the Laplace transform of $f$ extends analytically to the region $\Omega_M$ defined in \eqref{eq:Omega}
 and satisfies the bound 
\begin{equation}\label{eq:bound}
 \sup_{\lambda \in \Omega_{M}} \frac{|\lambda \widehat{f}(\lambda)|}{K(|{\I\lambda}|)} < \infty.
\end{equation}
Then also 
$$|g(t)|=O\big(r(|t|)\inv\big),\quad |t|\to\infty,$$
for every bounded Lipschitz continuous function $g\in W^{1,1}(\RR)$ such that $g'$ is uniformly continuous and the function $\widehat{g}$ defined as in \eqref{eq:transform} extends analytically to the region
\begin{equation}\label{eq:Omega'}
\Omega'_M=\left\{\lambda\in\CC:|{\R\lambda}|<\frac{1}{M(|{\I\lambda}|)}\right\}\end{equation}
and satisfies 
\begin{equation}\label{eq:bound'}
\sup_{\lambda\in \Omega'_M}\frac{|\lambda \widehat{g}(\lambda)|}{K(|{\I\lambda}|)}<\infty.
\end{equation}
\end{lem}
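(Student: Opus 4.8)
The plan is to reduce the two-sided conclusion for $g$ to two applications of the hypothesis, one controlling the behaviour as $t\to+\infty$ and one as $t\to-\infty$. Since $t\mapsto g(-t)$ again lies in $W^{1,1}(\RR)$, is bounded and Lipschitz with uniformly continuous derivative, has transform $\lambda\mapsto\widehat g(-\lambda)$, and inherits \eqref{eq:bound'} because $\Omega'_M$ is invariant under $\lambda\mapsto-\lambda$, it is enough to prove that $|g(t)|=O(r(t)\inv)$ as $t\to+\infty$. For this I would apply the hypothesis to the restriction $f=g|_{\RR_+}$, which is bounded and Lipschitz with $f'$ uniformly continuous and lies in $W^{1,1}(\RR_+)$; as $f$ and $g$ agree on $\RR_+$, this gives the conclusion for $g$. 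Everything thus reduces to checking that the Laplace transform of $f$ extends analytically to $\Omega_M$ and satisfies \eqref{eq:bound}.

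For the analytic continuation, put $v(t)=g(-t)$ for $t\ge0$, so that $v\in W^{1,1}(\RR_+)$ too. The Laplace transform $\widehat f$ is holomorphic on $\{\R\lambda>0\}$ and, since $f\in L^1(\RR_+)$, extends continuously to the closed right half-plane; likewise $\widehat v$. Splitting $\int_\RR=\int_0^\infty+\int_{-\infty}^0$ and substituting $t\mapsto-t$ in the second integral gives $\widehat g(\lambda)=\widehat f(\lambda)+\widehat v(-\lambda)$ on $i\RR$, that is,
\begin{equation*}
\widehat f(\lambda)=\widehat g(\lambda)-\widehat v(-\lambda),\qquad\lambda\in i\RR.
\end{equation*}
Now $\widehat g-\widehat v(-\cdot)$ is holomorphic on $\Omega'_M\cap\{\R\lambda<0\}$ and continuous up to $i\RR$ there (using that $\widehat g$ is holomorphic on $\Omega'_M$), while $\widehat f$ is holomorphic on $\{\R\lambda>0\}$ and continuous up to $i\RR$. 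Hence the function equal to $\widehat f$ on $\{\R\lambda>0\}$, equal to $\widehat g-\widehat v(-\cdot)$ on $\Omega'_M\cap\{\R\lambda<0\}$, and equal to their common value on $i\RR$ is continuous on the open set $\{\R\lambda>0\}\cup i\RR\cup(\Omega'_M\cap\{\R\lambda<0\})=\Omega_M$ and holomorphic off $i\RR$, so by Morera's theorem it is holomorphic on all of $\Omega_M$; this is the desired continuation of $\widehat f$. (Openness of $\Omega_M$ uses the continuity of $M$.)

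For the bound on $|\lambda\widehat f(\lambda)|/K(|\I\lambda|)$ over $\Omega_M$ I would treat the right and left halves separately. On $\{\R\lambda\ge0\}$, integration by parts gives $\lambda\widehat f(\lambda)=f(0)+\widehat{f'}(\lambda)$, so $|\lambda\widehat f(\lambda)|\le|f(0)|+\|f'\|_{L^1(\RR_+)}$, and division by $K(|\I\lambda|)\ge K(0)>0$ keeps the quotient bounded. On $\Omega_M\cap\{\R\lambda\le0\}=\Omega'_M\cap\{\R\lambda\le0\}$ I would instead use the identity $\widehat f(\lambda)=\widehat g(\lambda)-\widehat v(-\lambda)$: here $|\lambda\widehat g(\lambda)|\lesssim K(|\I\lambda|)$ by \eqref{eq:bound'}, while a second integration by parts gives $|\lambda\widehat v(-\lambda)|=|(-\lambda)\widehat v(-\lambda)|\le|v(0)|+\|v'\|_{L^1(\RR_+)}$, so the quotient is bounded here too. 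Combining the two ranges yields \eqref{eq:bound} for $f$, and the hypothesis applied to $f$ completes the proof.

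The step I expect to be the main obstacle is the analytic continuation of $\widehat f$ across the imaginary axis: a priori $\widehat f$ is defined only on the right half-plane, and it is only the identity $\widehat f=\widehat g-\widehat v(-\cdot)$ combined with the analyticity of $\widehat g$ on $\Omega'_M$ that produces the extension to $\Omega_M$. It is also worth stressing that the hypothesis $g\in W^{1,1}(\RR)$ is genuinely used in the bound: control of $g'$ is precisely what makes the half-plane estimate $\lambda\widehat f(\lambda)=f(0)+\widehat{f'}(\lambda)$ available, whereas boundedness of $g$ alone yields only $|\widehat f(\lambda)|\le\|f\|_\infty/\R\lambda$, which is useless near $i\RR$ when $K$ is permitted to be small.
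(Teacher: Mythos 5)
Your argument is correct, and it is essentially the proof given in the cited source \cite[Lemma~2.1]{DebSei18} (the present paper only quotes the lemma): restrict $g$ to the two half-lines, continue the one-sided Laplace transform across $i\RR$ via the identity $\widehat f=\widehat g-\widehat v(-\cdot)$ together with Morera's theorem, and obtain \eqref{eq:bound} from \eqref{eq:bound'} on the left half of $\Omega_M$ and from integration by parts (using $g\in W^{1,1}(\RR)$ and $K\ge K(0)>0$) on the right half. All the details you flag as potential obstacles are handled correctly, so nothing further is needed.
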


The following result is a strengthened version of the second part of \cite[Theorem~2.2]{DebSei18}, where a similar result is proved when $M=K$ but with the additional assumption that $M$ grows at least polynomially. The result is ancillary in nature for the purposes of the present paper, but in fact it shows optimality of a certain quantified Tauberian theorem for scalar-valued functions, as discussed in \cite{DebSei18}.

\begin{thm}\label{thm:opt}
Let $M,K, r\colon\RR_+\to(0,\infty)$ be non-decreasing functions and assume that $M$ and $K$ are continuous. Suppose further that the function $\MK\colon\RR_+\to(0,\infty)$ defined as in Theorem~\ref{thm:MK}, satisfies $\MK(s) =O( e^{\alpha s})$ as $s\to\infty$ for some $\alpha>0$. If 
\begin{equation}\label{eq:f_r2}
|f(t)|=O\big(r(t)\inv\big),\quad t\to\infty,
\end{equation}
 for every bounded Lipschitz continuous function $f\colon\RR_+\to\CC$ such that $f'$ is uniformly continuous and the Laplace transform of $f$ extends analytically to the region $\Omega_M$ defined in \eqref{eq:Omega} and satisfies the bound \eqref{eq:bound}, then
 \begin{equation}\label{eq:result}
 r(t)=O\big(\MK\inv(t)\big),\quad t\to\infty.
 \end{equation}
 \end{thm}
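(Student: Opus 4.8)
The plan is to refine the open-mapping argument of \cite{DebSei18}. Since the hypothesis of Theorem~\ref{thm:opt} is exactly the hypothesis of Lemma~\ref{lem}, that lemma yields $|g(t)|=O(r(|t|)\inv)$ as $|t|\to\infty$ for every bounded Lipschitz function $g\in W^{1,1}(\RR)$ with $g'$ uniformly continuous whose transform $\widehat g$, defined as in \eqref{eq:transform}, extends analytically to the region $\Omega'_M$ from \eqref{eq:Omega'} and satisfies \eqref{eq:bound'}. Let $\mathcal X$ be the space of all such $g$ equipped with the norm $\|g\|_{\mathcal X}=\|g\|_{W^{1,\infty}(\RR)}+\|g\|_{W^{1,1}(\RR)}+\sup_{\lambda\in\Omega'_M}|\lambda\widehat g(\lambda)|/K(|\I\lambda|)$. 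One checks that $\mathcal X$ is a Banach space continuously embedded in $L^\infty(\RR)$. Since $r$ is non-decreasing and finite-valued, the map $\Phi\colon\mathcal X\to L^\infty(\RR)$, $\Phi(g)(t)=r(|t|)g(t)$, is well defined and linear, and it has closed graph because $\mathcal X$-convergence forces uniform convergence; hence, as in \cite{DebSei18}, the closed graph theorem provides a constant $C>0$ with $r(|t|)\,|g(t)|\le C\|g\|_{\mathcal X}$ for all $g\in\mathcal X$ and a.e.\ $t\in\RR$.

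The heart of the proof is to insert well-chosen rescaled translates of the functions $f_k$ from Lemma~\ref{lem:sequ} into this inequality. Fix $c>0$ small enough for Lemma~\ref{lem:sequ} and so that $c<6M(0)/\alpha$, and take the corresponding $f_k$. For all large $k$ put $\beta_k=c\log(k+1)/M(0)>1$, $\ell_k=\beta_k/6$, $n_k=\lceil\MK(\ell_k)\rceil$, and $g_k(t)=f_k(\beta_k(t-n_k))$. Then $\widehat{g_k}(\lambda)=e^{-\lambda n_k}\beta_k\inv\widehat{f_k}(\lambda/\beta_k)$, which by property 2.(b) and the choice of $\beta_k$ extends analytically to $\{|\R\lambda|<1/M(0)\}\supseteq\Omega'_M$; dilation and translation preserve the regularity of $g_k$, and property 1.(a) gives $\|g_k\|_{W^{1,\infty}}+\|g_k\|_{W^{1,1}}\lesssim\beta_k$. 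To bound the transform term one splits $\Omega'_M$ at $|\lambda|=\ep_0\beta_k$. For $|\lambda|<\ep_0\beta_k$, property 2.(a) gives $|\lambda\widehat{g_k}(\lambda)|\lesssim e^{n_k/M(0)}\ep_0^{k+1}$, which divided by $K(|\I\lambda|)\ge K(0)$ is $\lesssim\beta_k$ because $n_k=O(e^{\alpha\ell_k})$ is $o(M(0)(k+1))$, using $k+1=e^{6M(0)\ell_k/c}$ and $\alpha<6M(0)/c$; this is where the hypothesis $\MK(s)=O(e^{\alpha s})$ and the choice of $c$ enter. For $|\lambda|\ge\ep_0\beta_k$ one has $|\I\lambda|\ge\ell_k$, and property 2.(b) gives $|\lambda\widehat{g_k}(\lambda)|\le Be^{n_k/M(\ell_k)}$ with $B=\sup_k\sup_{\lambda\in S_{k,c}}|\lambda\widehat{f_k}(\lambda)|$, which divided by $K(|\I\lambda|)\ge K(\ell_k)$ is $\lesssim\beta_k$ provided $n_k\le M(\ell_k)\log(C_2\beta_kK(\ell_k)/B)$; since $C_2$ here is a constant that we are free to take as large as we wish, choosing it large enough makes this bound exceed $\MK(\ell_k)\ge n_k-1$ for all large $k$, so it holds. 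Hence $g_k\in\mathcal X$ with $\|g_k\|_{\mathcal X}\lesssim\beta_k\asymp\ell_k$.

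By property 1.(b), $|g_k(n_k)|=|f_k(0)|\ge\inf_j|f_j(0)|>0$, so the estimate from the first step gives $r(n_k)\lesssim\ell_k$ for all large $k$. The function $\MK$ is continuous and strictly increasing, with $\MK(s)\ge M(0)\log(1+s)\to\infty$, and $\ell_{k+1}-\ell_k\to0$; hence for every sufficiently large $t$ there is a $k$ with $\MK(\ell_k)\ge t$ and $\ell_k\le2\MK\inv(t)$, and then $n_k\ge\lceil t\rceil$, so that $r(t)\le r(n_k)\lesssim\ell_k\lesssim\MK\inv(t)$. This is \eqref{eq:result}.

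The main obstacle is the construction in the second step: one must take $n_k$ as large as $\MK(\ell_k)$ — and not merely a fixed multiple of it, since for slowly growing $\MK$ the constant inside $\MK\inv$ cannot be absorbed — while keeping $\widehat{g_k}$ analytic on all of $\Omega'_M$ with $|\lambda\widehat{g_k}(\lambda)|\lesssim\beta_kK(|\I\lambda|)$ there. This requires the polynomial vanishing of $\widehat{f_k}$ near the imaginary axis from property 2.(a) to absorb the factor $e^{n_k|\R\lambda|}$ produced by the translation, and the two resulting exponential constraints — one near the axis and one away from it — are reconciled precisely by the growth restriction $\MK(s)=O(e^{\alpha s})$ together with the freedom, granted by the final clause of Lemma~\ref{lem:sequ}, to make $c$ arbitrarily small.
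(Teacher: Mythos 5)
Your proposal is correct and follows essentially the same route as the paper: Lemma~\ref{lem} plus a Banach-theorem uniform bound (you use the closed graph theorem where the paper uses the open mapping theorem on the embedding $Y\hookrightarrow X$, which is equivalent), followed by testing the resulting inequality against rescaled translates $f_k(\beta_k(\cdot-n_k))$ of the functions from Lemma~\ref{lem:sequ} and splitting the frequency domain near and away from the origin. The only differences are cosmetic: you parametrise by $k$ (fixing $\beta_k=c\log(k+1)/M(0)$ and pushing $n_k$ up to $\lceil\MK(\ell_k)\rceil$, then interpolating via monotonicity of $r$), whereas the paper chooses $k=\lfloor t\rfloor$ and $R\asymp\MK\inv(t)$ directly as functions of $t$.
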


\begin{proof} The proof is similar to that of \cite[Theorem~2.2]{DebSei18}, except that we use the sequence constructed in Lemma~\ref{lem:sequ} at a crucial stage.  Let $X$ be the vector space of all  bounded functions $g \in W^{1,\infty}(\RR)\cap W^{1,1}(\RR)$ such that $g'$ is uniformly continuous and the function $\widehat{g}$ defined in \eqref{eq:transform} extends analytically to the region $\Omega'_M$ defined in \eqref{eq:Omega'} and satisfies the bound  \eqref{eq:bound'}. We endow $X$ with the complete norm
\begin{equation} \label{eq:defnormx}
\|g\|_X=\|g\|_{W^{1,1}}+\|g\|_{W^{1,\infty}}+\sup_{\lambda\in \Omega'_M}\frac{|{\lambda\widehat{g}(\lambda)}|}{K(|{\I\lambda}|)},\quad g\in X.
\end{equation}
Let $Y$ be the set of all functions $g\in X$ such that $|g(t)|=O(r(|t|)\inv)$ as $|t|\to\infty$, endowed with the complete norm
$$\|g\|_Y=\|g\|_X+\sup_{t\in\RR}|g(t)|r(|t|),\quad g\in Y.$$
It follows from the definitions that $Y$ is continuously embedded in $X$, and by our assumptions and Lemma~\ref{lem} we also have $X\subseteq Y$. By the open mapping theorem the embedding of $Y$ into $X$ is an open map, and hence $\|g\|_Y\lesssim\|g\|_X$, $g\in X$, and in particular
\begin{equation}\label{eq:g_bd}
\sup_{t\in\RR}|g(t)|r(|t|)\lesssim \|g\|_X,\quad g\in X.
\end{equation}
We now consider specifically chosen functions $g\in X$. Indeed, consider the functions $g_{k,R,t}$ defined for $k\in\NN$, $R>0$ and $t\in\RR$ by $g_{k,R,t}(s)=f_k(R(s-t))$, $s\in\RR$, where the functions $f_k\in W^{1,\infty}(\RR)\cap W^{1,1}(\RR)$, $k\in\NN$, are as in Lemma~\ref{lem:sequ}.  In particular, $f_k'$ is uniformly continuous and
$$\widehat{g_{k,R,t}}(\lambda)=\frac{e^{-t\lambda}}{R}\widehat{f_k}\left(\frac{\lambda}{R}\right),\quad \lambda\in i\RR,$$
so  provided that
\begin{equation}\label{eq:cond}
R \geq \frac{c \log (k+1)}{M(0)}
\end{equation} 
we have $g_{k,R,t}\in X$ by property 2.(b) of Lemma~\ref{lem:sequ}. By properties 1.(a) and 1.(b) of Lemma~\ref{lem:sequ} we obtain 
\begin{equation} \label{eq:est}
 r(t) \lesssim \sup_{s \in \RR} |g_{k,R,t}(s)|r(|s|) \lesssim R +  R\inv\sup_{\lambda \in \Omega'_M} \left|\frac{\lambda e^{-t\lambda}}{K(|{\I \lambda}|)}\widehat{f_k}\left(\frac{\lambda}{R}\right)\right|.
\end{equation}
Here the implicit constants are independent of $k, R$ and $t$. We shall choose $R$ and $k$ depending on $t$ so as to obtain a particularly sharp upper bound in \eqref{eq:est}. To estimate the supremum suppose first that $|{\I \lambda}| > \varepsilon R$, where the value of $\varepsilon\in(0,1)$ is sufficiently small in a sense to be made precise below. Since $M$ and $K$ are non-decreasing we see using \eqref{eq:cond} and property 2.(b) of Lemma~\ref{lem:sequ} that
\begin{equation} \label{eq:est1}
 \left|\frac{\lambda e^{-t\lambda}}{K(|{\I \lambda}|)}\widehat{f_k}\left(\frac{\lambda}{R}\right)\right| \lesssim \frac{R}{K(\varepsilon R)}\exp\left(\frac{t}{M(\varepsilon R)}\right)
 \end{equation}
for all $\lambda\in\Omega_M'$ such that $|{\I \lambda}| > \varepsilon R$. Now let $\lambda\in\Omega_M'$ with $|{\I \lambda}| \le \varepsilon R$. By property 2.(a) of Lemma~\ref{lem:sequ}, if $R\ge(\ep M(0))\inv$ and $\ep\in(0,\ep_0/2)$ then 
\begin{equation}\label{eq:est2}
\left|\frac{\lambda e^{-t\lambda}}{K(|{\I \lambda}|)}\widehat{f_k}\left(\frac{\lambda}{R}\right)\right| \lesssim R\exp\left(\frac{t}{M(0)}\right)(2\ep)^{k}.
\end{equation}
We now choose $k=\lfloor t\rfloor$ for $t\ge1$ and assume in addition that $2\ep\le\exp(-M(0)\inv)$. Then \eqref{eq:est2} becomes
\begin{equation} \label{eq:est3}
\left|\frac{\lambda e^{-t\lambda}}{K(|{\I \lambda}|)}\widehat{f_k}\left(\frac{\lambda}{R}\right)\right| \lesssim R.
\end{equation}
We now set $R=\ep\inv \smash{\MK\inv(t)}$ for $t\ge1$ sufficiently large. Let $C>0$ be such that $\MK(s)\le Ce^{\alpha s}$, $s\ge0$, and observe that for sufficiently large values of $t$ we have
$$R\ge\frac{\log (C\inv t)}{\ep\alpha}\ge \frac{\log(k+1)}{\alpha},$$
so that \eqref{eq:cond} holds provided the constant $c>0$ appearing in 2.(b) of Lemma~\ref{lem:sequ} satisfies $c\le\alpha\inv M(0)$, as we may assume it does by the final statement in Lemma~\ref{lem:sequ}. It follows from \eqref{eq:est}, \eqref{eq:est1} and \eqref{eq:est3} that $r(t)=O(\MK\inv(t))$ as $t\to\infty$, so the proof is complete.
\end{proof}

\section{Optimal decay for operator semigroups}\label{sec:sg}

We now come to our main result, which proves that Theorem~\ref{thm:MK} is optimal in a strong sense. Our proof closely follows that of \cite[Theorem~3.1]{DebSei18}, which in turn is based on \cite[Theorem~7.1]{BaBoTo16}, \cite[Theorem~4.1]{BT10} and \cite[Theorem~4.10]{Sta18}. We shall say that a function $M\colon\RR_+\to(0,\infty)$ is \emph{regularly growing} if it is  non-decreasing and continuous and there exists $c\in(0,1)$ such that 
\begin{equation}\label{eq:reg}
M(s)\ge c M\left(s+\frac{c}{M(s)}\right),\quad s\ge0.
\end{equation}
As discussed in \cite{DebSei18} this is a very mild regularity condition, and in particular the condition is significantly milder than those required in \cite[Section~4]{Sta18}. Note also that if a function $M$ is regularly growing then \eqref{eq:reg} is necessarily satisfied for all sufficiently small $c\in(0,1)$. We emphasise, however, that unlike in \cite{DebSei18} our definition of regular growth no longer includes any growth conditions. We shall still require an upper bound in our main result below, although as discussed in Section~\ref{sec:intro} at least in the special case where $M=K$ this entails no significant loss of generality. Crucially, though, we no longer impose any lower bounds on the functions $M$ and $K$.

\begin{thm}\label{thm:sg}
Let $M,K\colon\RR_+\to(0,\infty)$ be regularly growing functions and suppose that the function $\MK\colon\RR_+\to(0,\infty)$ defined in Theorem~\ref{thm:MK} satisfies $\MK(s)=O(e^{\alpha s})$ as $s\to\infty$ for some $\alpha>0$. Then there exists a complex Banach space $X$ and a bounded $C_0$-semigroup $\T$ on $X$ with generator $A$ such that $\Omega_{\delta M}\subset\rho(A)$ and
\eqref{eq:K_bound} holds for some $\delta>1$, and moreover
\begin{equation*}\label{eq:lb}
\limsup_{t\to\infty}\big\|\MK\inv(t)T(t)A\inv\big\|>0.
\end{equation*}
\end{thm}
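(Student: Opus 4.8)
The plan is to build the Banach space $X$ and semigroup $\T$ by a direct sum construction over a sequence of finite-dimensional (or simple infinite-dimensional) building blocks, following the approach of \cite{BT10} and \cite[Theorem~3.1]{DebSei18}, and then to use Theorem~\ref{thm:opt} together with a closed-graph/open-mapping argument to show that if the decay rate were strictly faster than $\MK\inv(t)\inv$ on \emph{every} such semigroup satisfying the hypotheses, we would contradict the scalar optimality result. More concretely, I would first fix, via Lemma~\ref{lem:sequ}, the sequence $f_k$ with the transform bounds $|\widehat{f_k}(\lambda)|\le C|\lambda|^k$ near $0$ and $\sup_k\sup_{\lambda\in S_{k,c}}|\lambda\widehat{f_k}(\lambda)|<\infty$, choosing $c>0$ small enough to match $M$ via the regular-growth condition \eqref{eq:reg}. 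Then for a suitable increasing sequence of frequencies $s_k\to\infty$ I would place, in the $k$-th summand, a rank-one-type perturbation of the shift/multiplication operator $-is_k$ so that the resolvent of the resulting generator $A_k$ mimics $\widehat{f_k}$ shifted to the line $\I\lambda=s_k$: the point is that $R(\lambda,A_k)$ should be controlled by $K(|\I\lambda|)$ on $\Omega_{\delta M}$ (using property 2.(b) of Lemma~\ref{lem:sequ} after the rescaling $R\sim \log(k+1)/M(0)$ and the definition of $\MK$) while $\|T_k(t)A_k\inv\|$ is, up to constants, $|f_k(R_k t)|$ with $R_k$ chosen so that the first zero or the decay threshold of $f_k$ occurs around $t\sim\MK(s_k)$.

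The second step is to assemble $X=\bigl(\bigoplus_k X_k\bigr)_{c_0}$ or an $\ell^p$-sum, with $A=\bigoplus_k A_k$ generating the direct-sum semigroup $T(t)=\bigoplus_k T_k(t)$; uniform boundedness of $\T$ and the uniform bounds of part 1.(a) of Lemma~\ref{lem:sequ} guarantee this is a bona fide bounded $C_0$-semigroup, and the uniform resolvent estimate gives $\Omega_{\delta M}\subset\rho(A)$ with \eqref{eq:K_bound}. The third step is the lower bound itself: pick a sequence of times $t_k$ with $t_k\asymp\MK(s_k)$ and unit vectors $x_k\in X_k$ (coming from $f_k(0)\ne0$, i.e. property 1.(b)) such that $\|T(t_k)A\inv x_k\|\gtrsim |f_k(0)|\gtrsim 1$ while $\MK\inv(t_k)\asymp s_k$, so that $\|\MK\inv(t_k)T(t_k)A\inv\|\gtrsim s_k\to\infty$ — which in particular forces the $\limsup$ to be positive (indeed infinite). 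Alternatively, and perhaps more cleanly, I would argue by contradiction using Theorem~\ref{thm:opt}: a decay estimate $\|T(t)A\inv\|=o(\MK\inv(t)\inv)$ on a single such semigroup, evaluated against the functionals and vectors built from the $f_k$, would produce a family of scalar functions $f$ satisfying the hypotheses of Theorem~\ref{thm:opt} but with $r(t)$ growing faster than $\MK\inv(t)$, contradicting \eqref{eq:result}.

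The main obstacle, as always in these constructions, is the third step in its quantitative form: engineering the building blocks $A_k$ so that \emph{simultaneously} (i) the resolvent stays bounded by $K(|\I\lambda|)$ on all of $\Omega_{\delta M}$ and not merely on the imaginary axis — this is where the strip analyticity $S_{k,c}$ of $\widehat{f_k}$ in Lemma~\ref{lem:sequ}~2.(b) and the regular-growth hypothesis \eqref{eq:reg} on $M$ do the real work, since one needs the resolvent to extend past $i\RR$ by exactly $1/(\delta M)$ while $\widehat{f_k}$ only extends by $1/(c\log(k+1))$, forcing the calibration $s_k\mapsto \log(k+1)\sim M(s_k)$-scale comparisons; and (ii) the norm $\|T_k(t)A_k\inv\|$ genuinely attains a value bounded below at time $t\asymp \MK(s_k)$ rather than merely being bounded above there. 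Handling the transition region and the subpolynomial case — where the precise constant $c=1$ matters and cannot be absorbed — requires care in choosing $R_k$ and in the rescaling $f_k(\alpha t)$ from the final sentence of Lemma~\ref{lem:sequ}; this is precisely the improvement over \cite{DebSei18} that the sharper Lemma~\ref{lem:sequ} is designed to deliver, so I would lean heavily on its uniform constants throughout.
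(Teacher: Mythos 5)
Your primary construction diverges from the paper's, and it contains a step that cannot work. The paper does not assemble $X$ from a direct sum of building blocks indexed by the $f_k$. Instead it takes $X$ to be a single space of bounded uniformly continuous functions on $\RR_+$ whose Laplace transforms extend analytically to a region slightly larger than $\Omega_M$ and satisfy a $K$-weighted bound, with the norm recording both the sup norm and this weighted transform bound, and lets $\T$ be the left-shift semigroup (as in \cite[Theorem~7.1]{BaBoTo16} and \cite[Theorem~4.1]{BT10}); the regular-growth condition \eqref{eq:reg} is what makes the resolvent estimates for the shift generator go through on $\Omega_{\delta M}$. The functions $f_k$ of Lemma~\ref{lem:sequ} play no role in this construction at all — they live entirely inside the proof of Theorem~\ref{thm:opt}, which is then invoked as a black box. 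The concrete error in your third step is the claim that $\|T(t_k)A\inv x_k\|\gtrsim|f_k(0)|\gtrsim1$ for unit vectors $x_k$ and times $t_k\to\infty$: for \emph{any} bounded $C_0$-semigroup with $i\RR\subset\rho(A)$ one has $\|T(t)A\inv\|\to0$ (Batty--Duyckaerts, or Theorem~\ref{thm:MK} itself), so no choice of blocks $A_k$ can deliver this estimate while also satisfying the resolvent hypotheses you need. The genuine difficulty is to show that the decay is no faster than $\MK\inv(t)\inv$, i.e.\ to produce $\|T(t_k)A\inv\|\gtrsim\MK\inv(t_k)\inv$, and your proposal does not engineer this.

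Your ``alternative, more cleanly'' route is the right idea but is missing its key ingredient. To argue by contradiction via Theorem~\ref{thm:opt}, you must pass from the assumed decay $\|T(t)A\inv\|=o(\MK\inv(t)\inv)$ to the hypothesis \eqref{eq:f_r2} of that theorem, which quantifies over \emph{every} admissible scalar function $f$, not just the specific $f_k$. This forces the semigroup to dominate all such $f$ simultaneously, and that universality is precisely what the function-space/shift construction provides: every admissible $f$ satisfies $f,f'\in X$ and $f=A\inv f'$, whence $|f(t)|\le\|T(t)A\inv f'\|_X=O(r(t)\inv)$ for an auxiliary non-decreasing $r$ with $\MK\inv(t)=o(r(t))$, and Theorem~\ref{thm:opt} then gives $r(t)=O(\MK\inv(t))$, a contradiction. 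A direct sum over blocks built only from the $f_k$ does not obviously have this property, and ``evaluated against the functionals and vectors built from the $f_k$'' does not bridge the gap. If you replace your block construction by the weighted shift space and run the contradiction argument as above, you recover the paper's proof.
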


\begin{proof}
Let $X$ be the vector space of all bounded uniformly continuous functions $f\colon\RR_+\to\CC$ whose Laplace transform extends to the region $\Omega=\{\lambda\in\CC:\R\lambda>-M(|\I\lambda|)\inv\mbox{ and }|{\R\lambda}|<1\}$ and satisfies
$$\sup_{\lambda\in\Omega}\frac{|\widehat{f}(\lambda)|}{K(|{\I\lambda}|)}<\infty,$$
endowed with the complete norm
$$\|f\|_X=\|f\|_{L^\infty}+\sup_{\lambda\in\Omega}\frac{|\widehat{f}(\lambda)|}{K(|{\I\lambda}|)},\quad f\in X.$$
Arguing as in the proof of  \cite[Theorem~7.1]{BaBoTo16} and \cite[Theorem~4.1]{BT10} we see that the left-shift semigroup $\T$ is a well-defined bounded $C_0$-semigroup on $X$ whose generator $A$, the differentiation operator on an appropriate domain, has all the required properties. Note that condition \eqref{eq:reg} is chosen precisely in such a way that all the arguments extend without major adjustments to our more general setting. Suppose for the sake of  contradiction that $\|T(t)A\inv\|=o(\smash{\MK\inv}(t)\inv)$ as $t\to\infty$. Then we may find a non-decreasing function $r\colon\RR_+\to(0,\infty)$ such that $\|T(t)A\inv\|=O(r(t)\inv)$ and $\smash{\MK\inv(t)}=o(r(t))$ as $t\to\infty.$ If $f\colon\RR_+\to\CC$ is a bounded Lipschitz continuous function such that $f'$ is uniformly continuous and the Laplace transform of $f$ extends analytically to the region $\Omega_M$ defined in \eqref{eq:Omega} and satisfies the bound \eqref{eq:bound}, then $f,f'\in X$ and $f=A\inv f'$. Hence  
$$|f(t)|\le\|T(t)f\|_{L^\infty}\le\|T(t)A\inv f'\|_X=O\big(r(t)\inv\big),\quad t\to\infty.$$
Hence $r(t)=O(\smash{\MK\inv(t))}$  as $t\to\infty$ by Theorem~\ref{thm:opt}, a contradiction.
\end{proof}

\begin{rem}
It is possible to weaken the regularity conditions in Theorem~\ref{thm:sg} slightly. For instance, instead of requiring that $M$ is regularly growing it would be sufficient to assume that
$$M(s)\ge c M\left(s+\frac{c}{K(s)}\right),\quad s\ge0,$$
for some, and hence all sufficiently small, $c\in(0,1)$. This assumption is marginally weaker since both $M$ and $K$ are non-decreasing and $K(s)\ge M(s),$ $s\ge0$. Moreover, by looking carefully at the details of the proofs of  \cite[Theorem~7.1]{BaBoTo16} and \cite[Theorem~4.1]{BT10} one sees that the semigroup $\T$ in Theorem~\ref{thm:sg} actually satisfies the conditions of the statement for many or indeed all values of  $\delta>1$ provided that $M(s)$ has no sudden growth spurts for sufficiently large values of  $s\ge0$.
\end{rem}

\end{document}